\newtheorem{theorem}{Theorem}
\newtheorem*{theorem*}{Theorem}
\newtheorem{lemma}[theorem]{Lemma}
\theoremstyle{remark}
\newtheorem{example}[theorem]{Example}
\newtheorem{prop}[theorem]{Proposition}
\newtheorem*{problem}{Problem}
\title[Regular graphs and symmetric functions]{Regularity in weighted
  graphs\\ a symmetric function approach} \author[Marni Mishna]{Marni
  Mishna\\Department of Mathematics\\Simon Fraser University}
\newcommand{\rsc}[2]{\left\langle#1, #2\right\rangle} 
\def\field{K}
\newcommand{\cS}{{\mathcal J}}
\newcommand{\cK}{{\mathcal K}}
\begin{document}
\begin{abstract} In this note we consider $k$-regular
  multigraphs, where the possible edge multiplicities are
  controlled. These structures are considered in a question recently
  posed by Brendan McKay. We express the generating functions using
  the scalar product of symmetric functions, and consequently give
  conditions on when the classes are D-finite.  We appeal to symmetric
  species results of Mendez to write the expressions in a systematic
  way.
\end{abstract}

\keywords{regular graphs, symmetric functions, D-finite, generating functions}

\maketitle

\section{Introduction}
The asymptotic enumeration of regular graphs is a compelling topic
that has appeared in many forms in combinatorics over the past half
century. There are several approaches, and each has its own
 conditions, and results. In this note we revisit the
symmetric function approach, introduced by Goulden, Jackson and
Reilly~\cite{GoJaRe83}, generalized by Gessel~\cite{Gess90}, and
automated by Chyzak, Mishna and Salvy~\cite{ChMiSa05}. The goal of
this work is to give insight on a recent related problem posed by
McKay:

\begin{problem}[McKay~\cite{McKa14}]
  Let $\cS$, and $\cS^*$ be subsets of the non-negative integers, and let
  $\mathbf{d} = d(n) = (d_1,\dots, d_n)$ be a vector of non-negative
  integers. Let $M(n,\cS,\cS^*)$ be the number of symmetric matrices whose
  diagonal entries are drawn from $\cS^*$ and off-diagonal entries from
  $\cS$, whose row sums are $d_1,\dots, d_n$. As usual in graph theory,
  entries on the diagonal are counted twice. We are interested in the
  asymptotic value of $M(n,\cS,\cS^*)$ in the sparse case, where the row
  sums do not grow very quickly with $n$.
\end{problem}

We consider this problem in the case that the $d_i$ take on a finite
number of values. (We call this set of possible values $\cK$). We show
that the generating function of the sequence $M(n, \cS, \cS^*)$ is
D-finite under certain conditions.

A univariate generating function is D-finite if it satisfies a linear
differential equation with polynomial coefficients. This property has
considerable implications on asymptotic enumeration: D-finite
functions have restrictions on their asymptotic form; asymptotic
information is encoded in the differential equation that it satisfies;
they can be treated with a number of automated tools. Although we do
not compute asymptotic formulas here, the fact that the generating
functions are D-finite can be useful for precisely such a computation.

We cast this problem in graph theoretic language as follows in order
to state our results precisely.  Let~$\cS$ and~$\cK$ be sets of
positive integers, and suppose additionally that $\cK$ is
finite. Let~$\mathcal{G}_{\cS,\cK,n}$ be the set of well-labelled graphs
on~$n$ vertices where edge weights are from
$\cS\cup\{0\}$ and the sum of weights of the edges incident to any
given vertex is an element of~$\cK$.  The case when all $d_i=k$ is
the case of $k$-regular graphs. Here a graph is well-labelled if
the label set is $\{1, 2, \dots, n\}$, where $n$ is the
number of vertices. 

The following is our main result. It appears below as
Theorem~\ref{thm:Main}.
\begin{theorem*} Let $\cS$ and $\cK$ be finite sets of positive
  integers. Let $G_{\cS,\cK}(z)$ be the generating function for the
  class~$\mathcal{G}_{\cS,\cK}$ of well-labelled graphs where edge
  weights are from $\cS\cup\{0\}$ and the sum of weights of the edges
  incident to any given vertex is an element of~$\cK$.  Then,
\[G_{\cS,\cK}(z)=\sum_n |\mathcal{G}_{\cS,\cK,n}|\, z^n\] 
  is D-finite. Furthermore, the differential
  equation satisfied by~$G_{\cS,\cK}(z)$ is theoretically computable.
\end{theorem*}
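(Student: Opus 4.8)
The plan is to realize $G_{\cS,\cK}(z)$ as a scalar product of symmetric functions and then invoke the effective scalar-product machinery of \cite{Gess90,ChMiSa05}. First I would encode the edge structure. Placing an edge of weight $s\in\cS\cup\{0\}$ between vertices $i$ and $j$ contributes $(x_ix_j)^s$ to the monomial recording the two degrees, so the local factor at the pair $\{i,j\}$ is $\phi(x_ix_j)$ with $\phi(t)=1+\sum_{s\in\cS}t^s$, and the full edge generating symmetric function is $\Phi=\prod_{i<j}\phi(x_ix_j)$. Taking logarithms and using $\sum_{i<j}f(x_ix_j)=\tfrac12\big(\sum_{i,j}f(x_ix_j)-\sum_i f(x_i^2)\big)$ rewrites this in the power-sum basis as
\[
  \Phi=\exp\Big(\tfrac12\sum_{m\ge 1}c_m\big(p_m^2-p_{2m}\big)\Big),\qquad c_m=[t^m]\log\phi(t).
\]
To impose the degree condition I would use that, for a symmetric function $F$, the Hall scalar product $\rsc{F}{h_\lambda}$ extracts the coefficient of the monomial $x^\lambda$, i.e.\ the number of graphs in which vertex $i$ has degree $\lambda_i$. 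Setting $H_\cK=\sum_{k\in\cK}h_k$ and expanding $H_\cK^{\,n}=\sum_\lambda \binom{n}{m(\lambda)}h_\lambda$, the multinomial coefficient supplies exactly the labelings of a degree multiset, so that $|\mathcal G_{\cS,\cK,n}|=\rsc{\Phi}{H_\cK^{\,n}}$ and hence, summing the geometric series in the completed ring,
\[
  G_{\cS,\cK}(z)=\rsc{\Phi}{(1-zH_\cK)^{-1}}.
\]

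The two finiteness hypotheses enter next. Because $\cK$ is finite, with $D=\max\cK$, each $h_k$ for $k\in\cK$ lies in $\C[p_1,\dots,p_D]$, so $H_\cK$ and therefore $(1-zH_\cK)^{-1}$ involve only the finitely many power sums $p_1,\dots,p_D$, with coefficients in $\C[[z]]$ that are rational, hence D-finite, in $z$. Since the $p_\mu$ are pairwise orthogonal, only the part of $\Phi$ supported on $p_\mu$ with all parts $\le D$ contributes to the scalar product; that is, I may replace $\Phi$ by its truncation at $p_m=0$ for $m>D$. Because $\cS$ is finite, $\phi$ is a polynomial, and this truncation is the exponential of an explicit polynomial in $p_1,\dots,p_D$ — a D-finite symmetric function in finitely many power-sum variables.

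At this point both arguments of the scalar product are D-finite symmetric functions depending on only finitely many power sums, so the effective scalar-product theorem of Chyzak, Mishna and Salvy \cite{ChMiSa05}, building on Gessel \cite{Gess90} and Goulden--Jackson--Reilly \cite{GoJaRe83}, applies: it guarantees that $\rsc{\Phi}{(1-zH_\cK)^{-1}}=G_{\cS,\cK}(z)$ is D-finite in $z$ and produces, via creative telescoping, a linear differential equation with polynomial coefficients that it satisfies, which is the computability claim. I expect the main obstacle to be not the invocation of this theorem but the bookkeeping that precedes it: justifying the identity $|\mathcal G_{\cS,\cK,n}|=\rsc{\Phi}{H_\cK^{\,n}}$ with the correct treatment of labelings and the interchange of summation with the scalar product, and — in the full matrix version of McKay's problem — of the diagonal contributions drawn from $\cS^*$, which modify $\Phi$ by a factor $\prod_i\psi(x_i^2)$. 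One then has to check precisely that finiteness of \emph{both} $\cS$ and $\cK$ is exactly what secures the ``finitely many power sums'' hypothesis on which the scalar-product theorem depends.
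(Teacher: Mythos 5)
Your proposal is correct and follows essentially the same route as the paper: your $\Phi=\prod_{i<j}\phi(x_ix_j)$ is the paper's $\mathbf{G}_\cS$, your exponential formula with $c_m=[t^m]\log\phi(t)$ is exactly Theorem~\ref{thm:graphGF} with $c_m=a_m$ (you derive it by direct symmetrization of the logarithm where the paper packages the identical computation as the plethysm $\mathbf{F}_\cS[e_2]$ via Proposition~\ref{thm:multinomial}), and the conclusion is the same scalar product $\rsc{\mathbf{G}_\cS}{(1-zH_\cK)^{-1}}$ combined with Theorem~\ref{thm:rsc_pres_df}. One small correction to your closing remark: only finiteness of $\cK$ is needed to secure the ``finitely many $p_i$'' hypothesis (the first argument of the scalar product may involve all the $p_i$), and indeed the paper notes that finiteness of $\cS$ is not necessary for the result.
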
 
The condition of finiteness on $\cS$ is not necessary, but is a
consequence of the finiteness of $\cK$. We show how to relax the
condition of finiteness, once there is more notation developed. 

For example~$G_{\{1\},\{k\}}(z)$ is generating function for simple,
labelled, $k$-regular graphs, and $G_{\{1,2,\dots, k\},\{k\}}(z)$ is
the generating function for labelled $k$-regular multigraphs. The
D-finiteness of these generating functions for generic~$k$ was
conjectured by Goulden, Jackson and Reilly~\cite{GoJaRe83}, and was
proved by Gessel~\cite{Gess90}.  Our strategy proves the more general
result by using the work of Mendez~\cite{Mend93} to create the same
framework as of~\cite{Gess90}, to which the work of~\cite{ChMiSa05}
then applies. This amounts to building a symmetric function encoding
of the full class of graphs, and then performing a subseries
extraction to realize the degree restriction. These are all
theoretically effective, and hence the differential equations are
potentially computable.

As Gessel noted, the class of all regular graphs is \emph{not\/}
D-finite. Consequently, our results, up to finite union of classes are
most likely optimal.

\subsection{Contribution: Structure of graphs with controlled
edge  multiplicities}
McKay remarked in his problem ``The simplest non-trivial case is
$\cS^*= \{0\}$ and $\cS = \{0, 2, 3\}$''.  In our notation, $\cS^*= \{0\}$
corresponds to the criterion that the graphs have no loops, and our
$\cS$ are the same. We first consider $d_i=k$ for all $i$, and loosen this to
consider then $d_i\in \cK$ for finite $\cK$ and conclude with some
more general comments. We also address $\cS^*=\{0,1\}$.

Our principal contribution is a new formulation of his problem, and
the resulting proof of D-finiteness. We find our approach to be of
interest as it gives a new way to view McKay's problem, and also a new
example for the class of symmetric species. In this context,
generalizations to hypergraph variants are very natural.

The outline of the strategy has three main steps:
\begin{enumerate}
\item Find a series encoding a superset of graphs with edge weights
  from~$\cS$;
\item Rewrite this series as symmetric function using the power sum
  symmetric functions;
\item Extract the subseries of terms with the desired degree sequence
  via a scalar product operation. 
\end{enumerate}

In Section~\ref{sec:Graphs} we describe how to write the graph
generating functions using symmetric functions. The construction
generalizes our previous work~\cite{Mish07} in a straightforward way
using the multiassemblies studied by Mendez~\cite{Mend93}. The
construction uses the species theory formalism~\cite{BeLaLe98}, but we
leave the category theoretic details to previous sources to avoid a
rather substantial detour that is well described elsewhere. We remark,
however, this appears to be different than the description of graphs
as a species recently developed by Gainer-Dewar and
Gessel~\cite{GaGe14}. That said, our Proposition~\ref{thm:multinomial}
bears some resemblance to the formulas in Section~6 of~Henderson's
species generalization~\cite{Hend05}, which is the formalism upon which
their work is based. Perhaps this problem is a good entry point for
that theory.

The D-finiteness result follows quickly once we adapt Waring's
formula. This is explained in Section~\ref{sec:DFSymSeries}. We
conclude with some directions on how to weaken the conditions as
stated.

\section{Labelled graph generating functions}
\label{sec:Graphs}
We start with a systematic encoding of graph classes using symmetric
functions.  

\subsection{Simple graphs and $X$-generating functions}
Let $G$ be a simple graph with vertex set~$V(G)=\{x_1,\dots, x_n\}$,
and edge set $E(G)$. We associate to~$G$ the monomial $\pi(G)$ defined
\[\pi(G)=\prod_{\{x_i,x_j\}\in E(G)}
x_ix_j = x_1^{d_1}x_2^{d_2}\dots x_k^{d_k},\]
where $d_i$ is the degree of $x_i$.  Let $\mathbf{G}(X)$ be
the generating function of the set of all labelled simple graphs~$\mathcal{G}$,
each with a vertex set a subset of $X=\{x_1,x_2\dots,\}$:
\[
\mathbf{G}(X)=\sum_{G\in \mathcal{G}} \pi(G) = \prod_{i<j} \left(1+x_ix_j\right).
\]
(Remark, this is a superset of the well-labelled graphs). To see the
second equality, remark that every edge is either present once, or not
at all.  Similarly, if $\overline{\mathcal{G}}$ is the set of graphs
that permit multiple edges (but not loops),
\[
\overline{\mathbf{G}}=\sum_{G\in \overline{\mathcal{G}}} \pi(G) = \prod_{i<j} \left(\frac{1}{1-x_ix_j}\right),
\]
as every edge exists some non-negative integer number of times. 

Under this description, the set~$\mathcal{G}$ is a symmetric
species~\cite{Mend93}, and the series encoding $\mathbf{G}(X)$ is what
Mendez calls the associated $X$-generating function. Our strategy is
to determine the $X$-generating function for the class of graphs
of~$\mathcal{G}_{\cS}$ in which edge weights are incorporated, and
treated as multiplicities. (Hence, that the edge weights be positive
integers is essential.) It is straightforward to get an expression for
this, and then it is a mechanical manipulation to get a form we
desire.

The $X$-generating function for the class of labelled graphs with
graphs edge weights from the set of positive integers~$\cS$ is
\begin{equation}
\label{eq:graph}
\mathbf{G}_\cS(X)=\sum_{G\in \mathcal{G}_\cS} \pi(G) = \prod_{i<j} \left(1+\sum_{s\in \cS}(x_ix_j)^s\right).
\end{equation}
For example,
$\mathbf{G}_{\{1,2,3,\dots\}}(X)=\overline{\mathbf{G}}(X)$.
When the $X$ is clear, we might choose to not indicate the variable
set.

To determine the series for graph classes with loops it is sufficient to
change the product index from~$i<j$ to~$i\leq j$. If, rather, the
loops have weights from a different set, say $\cS^*$, we multiply
Eq.~\eqref{eq:graph} by the product
$\prod_{i} \left(1+\sum_{s\in \cS^*}(x_ix_i)^s\right)$. 

The number of well-labelled graphs in this class with given degree
sequence $d_1, \dots, d_n$ is simply the coefficient of the monomial
$x_1^{d_1}x_2^{d_2}\dots x_n^{d_n}$. For suitably defined graph classes,
and degree vectors, this is the value under investigation by McKay. In
standard generating function notation we write the coefficient as
\[
[x_1^{d_1}x_2^{d_2}\dots
x_n^{d_n}] \mathbf{G}_\cS.
\]
As is always the case for symmetric species, the $X$-generating functions are
symmetric functions. We access the coefficient using classic symmetric
function operations.

\subsection{Expressing $\mathbf{G}_\cS(\mathbf{x})$ using symmetric functions}
Since we can relabel any graph with a different set of labels and it
remains in the class, $\mathcal{G}_\cS$ is a symmetric class with
respect to the graph labels. We leverage this underlying symmetry to
rewrite the generating function in terms of symmetric functions.

Let $\lambda=(\lambda_1, \dots, \lambda_k)$ be a integer partition of
$n$, a fact which we denote by~$\lambda\vdash n$.  Let
$X=x_1,x_2,\dots$ be an infinite, but countable, variable set. Then the
symmetric function $m_\lambda(X)$ (or simply $m_\lambda$) is defined
\[
m_\lambda(X)=\sum x_{i_1}^{\lambda_1}x_{i_2}^{\lambda_2}\dots x_{i_k}^{\lambda_k}
\]
where the sum is over all $k$-tuples of distinct positive integers
$(i_1, i_2,\dots, i_k)$. This is the monomial symmetric function
indexed by $\lambda$. The set of the monomial symmetric functions form
a basis for a vector space of symmetric functions over $\mathbb{Q}$.
 
We express the classic elementary, complete,  and power sum
symmetric functions in the monomial basis as follows:
\[ e_n = m_{(1,1,\dots, 1)}, 
\quad h_n = \sum_{\lambda\vdash n}m_\lambda, 
 \quad p_n = m_{(n)}.
\]
Recall the
definition~$e_\lambda=e_{\lambda_1}e_{\lambda_2}\dots
e_{\lambda_k}$.
The set of the elementary symmetric functions indexed by partitions
also forms a basis for a vector space of symmetric functions.  This is
also true for the sets of $h_\lambda$ and $p_\lambda$ respectively,
which are similarly defined. We work in the ring of symmetric series
over~$X$:
\[
\Lambda((X)):= \mathbb{Q}[[p_1, p_2, p_3, \dots]].
\]
In particular we are interested in symmetric series~$\mathbf{R}(X)$ of the form
\[
\mathbf{R}(X)=\sum_{0<n}\sum_{\lambda\vdash n} c_\lambda p_\lambda(X).
\]

The symmetric function operation known as {\sl plethysm\/} is
essential to our solution. Given two symmetric functions~$u$ and~$v$,
the inner law defines the quantity~$u[v]$ by defining the following
rules, with $u, v, w\in\Lambda$ and~ $\alpha,\beta$ in~$\field$
\begin{equation*}
  (\alpha u+\beta v)[w]=\alpha u[w]+\beta v[w],
  \quad
  (uv)[w]=u[w]v[w],
\end{equation*}
and, most importantly, if $w=\sum_\lambda c_\lambda p_\lambda$ then
$p_n[w]=\sum_\lambda c_\lambda
p_{(n\lambda_1)}p_{(n\lambda_2)}\ldots$.
For example, we can deduce that $w[p_n]=p_n[w]$, and in particular that
$p_n[p_m]=p_{nm}$.  In a mnemonic way:
\begin{equation}\label{eq:pleth}
w[p_n]=w(p_{1n},p_{2n},\dots,p_{kn},\ldots)
\qquad\text{whenever}\qquad
w=w(p_1,p_2,\dots,p_k,\ldots).
\end{equation}

Let $\mathbf{H}=\sum_n h_n$ and $\mathbf{E}=\sum_n e_n$. Gessel noted
that $\mathbf{G}$ and $\overline{\mathbf{G}}$ can both be
expressed using plethysm:
\begin{eqnarray}
\mathbf{G}&=& \prod_{i<j} \left(1+x_ix_j\right)=\sum_n
e_n[e_2]=\mathbf{E}[e_2].\\
\overline{\mathbf{G}}&=& \prod_{i<j}
\left(\frac{1}{1-x_ix_j}\right) =\sum_n h_n[e_n]=\mathbf{H}[e_2].
\end{eqnarray}

Given Eq.~\eqref{eq:pleth}, often plethysm expressions are easier to
manipulate when the symmetric functions are written in the power sum
basis. We do this next in Section~\ref{sec:powersum}, and this is
followed by a discussion on how to derive the plethym expressions for
$\mathbf{G}_\cS(X)$ in~\ref{sec:SymSpecies}.

\subsection{Expressions in the power sum basis}
\label{sec:powersum}
We recall the following classic lemma as it guides our work.  It shows
how to express an infinite sum of $h_n$ as a function of power sum
symmetric functions.
\begin{lemma}[Waring formula]\label{thm:exp-log}
The following equations are true:
\[
\mathbf{H}=\sum_nh_n=\prod_{i<j} \frac{1}{1-x_i}=\exp\left(\sum_{0<k} p_k/k\right),
\] 
\[
\mathbf{E}=\sum_ne_n=\prod_{i<j} (1+x_i)=\exp\left(\sum_{0<k} (-1)^{k+1}p_k/k\right).
\] 
\end{lemma}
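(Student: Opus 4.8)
The plan is to prove both identities by the same two-move strategy: first establish the product expansions combinatorially, and then pass to the exponential form by taking a formal logarithm. I would present the two cases in parallel, since the only difference between them is the sign pattern coming from $\log(1+x)$ versus $-\log(1-x)$.

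First I would verify the product formulas directly from the definitions of $e_n$ and $h_n$. Expanding $\prod_i(1+x_i)$ and collecting the terms of total degree $n$, each such term is a product $x_{i_1}\cdots x_{i_n}$ over a strictly increasing index tuple, which is exactly a monomial occurring in $e_n=m_{(1,1,\dots,1)}$; summing over all $n$ yields $\mathbf{E}=\sum_n e_n$. Likewise, writing each factor as $\frac{1}{1-x_i}=\sum_{r\geq 0}x_i^r$ and expanding $\prod_i\frac{1}{1-x_i}$, the degree-$n$ part is the sum of \emph{every} monomial of degree $n$, counted with repetition, which is precisely $h_n=\sum_{\lambda\vdash n}m_\lambda$; summing over $n$ gives $\mathbf{H}$. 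These are the specializations at $t=1$ of the familiar series $\sum_n h_n t^n=\prod_i(1-x_it)^{-1}$ and $\sum_n e_n t^n=\prod_i(1+x_it)$.

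Next I would take the formal logarithm of each product and expand each factor as a power series in $x_i$. For $\mathbf{H}$,
\[
\log \prod_i \frac{1}{1-x_i} = -\sum_i \log(1-x_i)=\sum_i\sum_{k\geq 1}\frac{x_i^k}{k}=\sum_{k\geq 1}\frac{1}{k}\sum_i x_i^k=\sum_{k\geq 1}\frac{p_k}{k},
\]
using that $p_k=\sum_i x_i^k=m_{(k)}$ together with a reordering of the double sum. Exponentiating recovers $\mathbf{H}=\exp\!\big(\sum_{k\geq 1}p_k/k\big)$. The computation for $\mathbf{E}$ is identical except that $\log(1+x_i)=\sum_{k\geq 1}(-1)^{k+1}x_i^k/k$, which introduces the alternating sign and gives $\mathbf{E}=\exp\!\big(\sum_{k\geq 1}(-1)^{k+1}p_k/k\big)$.

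The only step that requires genuine care, rather than routine algebra, is justifying these manipulations in infinitely many variables. Since $X$ is a countable variable set, I would carry out the entire argument inside the completed ring $\Lambda((X))=\mathbb{Q}[[p_1,p_2,\dots]]$, where $\exp$ and $\log$ are well-defined formal operations on series with zero constant term, and where interchanging the order of summation over $i$ and $k$ is legitimate degree by degree, because each monomial of a fixed total degree receives only finitely many contributions. With that framing in place, both equalities reduce to the formal bookkeeping above.
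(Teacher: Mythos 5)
Your proposal is correct and follows essentially the same route as the paper: the paper's proof is exactly the formal log--exp expansion $\log\prod_i(1-x_i)^{-1}=\sum_i\sum_{k\geq 1}x_i^k/k=\sum_{k\geq 1}p_k/k$, with the interchange of summation and the $\mathbf{E}$ case left implicit. Your additional verification of the product expansions and the remark on working degree by degree in $\Lambda((X))$ are sound elaborations of steps the paper treats as elementary.
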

\begin{proof}
The proof is elementary series manipulations:
\begin{eqnarray*}
 \log \prod_{0<i} \frac{1}{1-x_i} 
= \sum_{0<i}\log \frac{1}{1-x_i}
= \sum_{0<i}\sum_{0<k} x_i^k/k 
= \sum_{0<k}\sum_{0<i} x_i^k/k 
= \sum_{0<k} p_k/k.
\end{eqnarray*}
\end{proof}
Indeed, the plethysms are easier to analyse given this form:
\begin{eqnarray*}
\overline{\mathbf{G}}&=&\mathbf{H}[e_2] 
= \exp\left(\sum_{0<k}\frac{1}{k}
  p_k\right)[e_2]\\
&=& \exp(\sum p_k[e_2]/k)\\
&=&\exp(\sum \frac{1}{k} p_k[p_1^2/2-p_2/2])\\ 
&=&\exp(\sum \frac{1}{2k} (p_k^2-p_{2k})).
\end{eqnarray*}

We can similarly express $\mathbf{E}[e_2]$ (and indeed
$\mathbf{H}[h_2]$, $\mathbf{H}[e_2]$).
\section{Labelled graphs as a symmetric species}
\label{sec:SymSpecies}
One of the innovations of species theory~\cite{Joya81,BeLaLe98}, is a
rigorous combinatorial interpretation of the plethysm operation in
terms of natural compositions of combinatorial
structures~\cite{Berg87}. Plethysm as an analytic analog to
composition has been well studied since P\'olya's composition
theorem. Asymmetric series of Labelle~\cite{Labe93} is also an
important relative to the $X$-generating functions that we seek.

The combinatorial understanding of the composition in the particular
case of~$\mathbf{H}$ and~$\mathbf{E}$ is formally developed by
Mendez~\cite{Mend93}, and we gave a direct interpretation in the case
of graphs, and other variants in~\cite{Mish07}. In
particular,~\cite{Mish07} contains is a description of graphs as
multisort, and ultimately symmetric, species. The interpretation is as
follows: a simple labelled graph is a set of edges. Edges are sets of
atomic structures.  Each atom is coloured a colour from the infinite
set $X=\{x_1, x_2\dots, \}$, and atoms of the same colour are
identified to form a vertex.  The combinatorial composition of a set
of edges is reflected in~$X$-generating function by a plethysm.

In the notation of classic species, the class of all labelled
multigraphs with loops allowed given by the multisort species
$E[E_2](X)$, and the associated cycle index series is
$\mathbf{H}[h_2]$. Here $E$ is the species of sets, and $E_2$ is the
species of a set of cardinality two. Thus, a graph is a set of edges,
but repetitions are not controlled in the multisort version. The
multiassembly construction of Mendez controls repetition of
elements. A \emph{multiassembly of type $\lambda$\/}, denoted
$M_\lambda$, is a multiset where the multiplicities of the elements
are prescribed by the parts of the partition $\lambda$. For example, a
multiassembly of type $\lambda=(1,1,\dots,1)$ is a usual set without
repetitions.  Example~3.7 and Proposition~3.9 of~\cite{Mend93}
describe how to get the $X$-generating function associated a composition
of a multiassembly and a some object. In particular, the
$X$-generating function of a multiassembly $M_\lambda(X)$ is
$m_\lambda(X)$, and the composition is realized by plethysm.

\begin{example}
  We view graphs as multiassemblies of edges, where edge
  multiplicities are given by~$\lambda$. The edges themselves are
  multiassemblies of vertices. For example, the species
  $M_{(3,2)}[M_{(1,1)}(X)]$ is the set of graphs with one edge of
  multiplicity two, and one edge of multiplicity three, under all
  possible positive integer labellings. There are two possible shapes,
  both under all possible valid labellings: The first graph is on
  three vertices, and is path. Each possible
  labelling contributes a monomial of the form~$x_i^5x_j^3x_k^2$ to
  the $X$-generating function. The
  second shape is a graph with four vertices, and
  two edges, and the graph is not connected. Each graph of this type
  contributes a monomial $x_i^2x_J^2x_k^3x_\ell^3$. The generating function is
  thus $m_{5,3,2}+m_{3, 3, 2, 2}$. This is precisely~$m_{(3,2)}[m_{(1,1)}]$.
\end{example}

The $X$-generating of $\mathbf{G}_\cS$ of $\mathcal{G}_\cS$
requires a sum over all possible partitions with parts from $\cS$.
Towards a more compact notation, define the symmetric function
$f_{\cS,n}$ as follows:
\begin{equation}
f_{\cS,n}= \sum_{\lambda\vdash n; \lambda_i\in \cS}m_\lambda.
\end{equation}
To unravel the definition, remark $f_{\{2,3\}, 6}=m_{2,2,2}+m_{3,3}$
and further note that~$f_{\{1\},n}=e_n$ and
$f_{\{1,\dots, n\},n}=h_n$.
We summarize these results in the following proposition. 
\begin{prop}
\label{thm:species} 
Fix~$\cS$, a nonempty integer partition, and $X=x_1, x_2,
\dots$, and infinite, but countable set of labels. 
\begin{enumerate}
\item The symmetric species of simple graphs $\mathcal{G}_\cS$ is
 isomorphic to $\cup_{\lambda\vdash n; \lambda_i\in \cS}
 M_\lambda[M_{(1,1)}(X)]$. 
\item  For any degree sequence~$\mathbf{d}=(d_1, \dots, d_n)$ with
  sum $D=d_1+\dots+d_n$,  the coefficient
\[
[x_1^{d_1}x_2^{d_2}\dots x_n^{d_n}] \sum_{\lambda \vdash D; \lambda_i\in
  \cS} m_\lambda [ e_2]
\]
is precisely the number of graphs in $\mathcal{G}_{\cS,\mathbb{N},n}$
with degree sequence $\mathbf{d}$. Equivalently, this is the
coefficient of $m_{\mathbf{d}}$ when expanded in the multinomial
basis of symmetric functions.\footnote{With $\mathbf{d}$ normalized to
  be decreasing if necessary.}
\end{enumerate}
\end{prop}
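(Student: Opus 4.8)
The plan is to prove part~(1) directly as an isomorphism of symmetric species and then read off part~(2) by passing to $X$-generating functions. For part~(1) I would exhibit the bijection underlying the claimed isomorphism. A graph $G\in\mathcal{G}_\cS$ is determined by its finite set of weighted edges: each edge is an unordered pair $\{x_i,x_j\}$ of distinct vertices carrying a weight in $\cS$ (weight~$0$ meaning the edge is absent). Recording the multiset of edge weights as a partition $\lambda=(\lambda_1\ge\lambda_2\ge\cdots)$, the condition that every weight lies in $\cS$ is exactly $\lambda_i\in\cS$. The graph is then precisely a multiassembly of type $\lambda$ whose underlying distinct objects are its base edges, each base edge being itself a multiassembly $M_{(1,1)}(X)$ of two distinct coloured atoms, namely its endpoints. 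This identifies $G$ with an element of $M_\lambda[M_{(1,1)}(X)]$, and ranging over all admissible $\lambda$ yields the asserted union. I would then check naturality: a relabelling of $X$ permutes the colours, sends edges to edges, and preserves weights, so it commutes with the identification; hence the correspondence is an isomorphism of symmetric species and not merely a bijection of structures.

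For part~(2) I would invoke the $X$-generating-function dictionary of Mendez (Example~3.7 and Proposition~3.9 of~\cite{Mend93}), under which the $X$-generating function of $M_\lambda(X)$ is $m_\lambda$ and species composition is realised by plethysm. Combining this with part~(1) and the identity $e_2=m_{(1,1)}$ gives
\[
\mathbf{G}_\cS(X)=\sum_{\lambda;\ \lambda_i\in\cS} m_\lambda[e_2],
\]
which also agrees with the product expansion~\eqref{eq:graph}. Since $\pi(G)=x_1^{d_1}x_2^{d_2}\cdots$ records exactly the degree sequence of $G$, the coefficient $[x_1^{d_1}\cdots x_n^{d_n}]\,\mathbf{G}_\cS(X)$ counts precisely the graphs of $\mathcal{G}_{\cS,\mathbb{N},n}$ with degree sequence $\mathbf{d}$.

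The one bookkeeping point I would treat carefully is the homogeneous degree. Each edge contributes its weight to both of its endpoints, so a graph of total edge-weight $|\lambda|$ has degree sum $D=2|\lambda|$; equivalently $m_\lambda[e_2]$ is homogeneous of degree $2|\lambda|$ in the $x_i$. Consequently, when extracting the coefficient of a monomial of total degree $D$, only the summands with $|\lambda|=D/2$ can contribute and every other term vanishes, so the displayed finite sum indeed computes the stated count. For the final ``equivalently'' clause I would use the standard fact that for any symmetric function $F=\sum_\mu c_\mu m_\mu$ and any partition $\mu$, the ordered monomial $x_1^{\mu_1}x_2^{\mu_2}\cdots$ occurs only in $m_\mu$, and there with coefficient $1$; hence $[x_1^{\mu_1}x_2^{\mu_2}\cdots]\,F=c_\mu$. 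Applying this with $\mu=\mathbf{d}$, normalised to be weakly decreasing as in the footnote, identifies the extracted coefficient with the coefficient of $m_{\mathbf{d}}$ in the monomial expansion.

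I expect the substantive step to be part~(1): verifying that the naive picture of a weighted graph as a multiset of edges coincides \emph{on the nose} with Mendez's multiassembly-of-multiassemblies construction, together with its $\mathbb{S}$-equivariance under relabelling. Once that isomorphism is in place, the generating-function manipulations of part~(2), including the degree-parity observation, are routine.
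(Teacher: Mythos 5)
Your proposal follows essentially the same route as the paper: the paper gives no separate proof of this proposition, presenting it instead as a summary of the Mendez multiassembly dictionary developed immediately before it (graphs as multiassemblies of type $\lambda$ of edges, edges as $M_{(1,1)}(X)$, the $X$-generating function of $M_\lambda$ being $m_\lambda$, and composition realised by plethysm), and your part~(1) and the first half of part~(2) simply make that argument explicit, including the equivariance check the paper leaves implicit. One point deserves attention, though: your degree-bookkeeping paragraph --- correctly observing that $m_\lambda[e_2]$ is homogeneous of degree $2|\lambda|$, so that only the terms with $|\lambda|=D/2$ can contribute to a monomial of total degree $D$ --- is actually inconsistent with the displayed formula in the statement, whose sum runs over $\lambda\vdash D$ and therefore consists entirely of terms homogeneous of degree $2D$, which cannot contain $x_1^{d_1}\cdots x_n^{d_n}$ at all. (Compare the paper's own example: $m_{(3,2)}[m_{(1,1)}]=m_{5,3,2}+m_{3,3,2,2}$, where $\lambda\vdash 5$ produces degree sequences summing to $10$.) The index should read $\lambda\vdash D/2$, with the coefficient vanishing for odd $D$. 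You have in effect proved the corrected statement; rather than asserting that ``the displayed finite sum indeed computes the stated count,'' you should flag the discrepancy explicitly, since as literally written the displayed coefficient is zero.
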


Similarly, the analogous class of graphs permitting
loops\footnote{Graphs with loops of weight 1 corresponds to
  $\cS^*=\{0,1\}$ in the original problem.}, denoted
${\mathcal{G}}^\circ_\cS$, is isomorphic as species to
\[\bigcup_{\lambda\vdash n; \lambda_i\in \cS}
M_\lambda[M_{(1,1)}(X)+M_{(2)}(X)],\] and the coefficient
\[
[x_1^{d_1}x_2^{d_2}\dots x_n^{d_n}] \sum_{\lambda \vdash D; \lambda_i\in
  \cS} m_\lambda [h_2]
\]
is precisely the number of graphs in ${\mathcal{G}}^\circ_{\cS,\mathbb{N},n}$
with degree sequence~$\mathbf{d}$.

\subsection{Series expressions}
\label{sec:series}
To prove the D-finiteness results, we need the $X$-generating
functions in a different format. We define
$\mathbf{F}_{\cS}=\sum_n f_{\cS,n}$ and hence
$\mathbf{F}_{\{1\}}=\mathbf{E}$ and
$\mathbf{F}_{\{1,2,\dots\}}=\mathbf{H}$. Now,
$\mathbf{F}_{\cS}= \prod_{0<i} ( 1+ \sum_{s\in\cS} x_i^s)$.  We
can generalize Lemma~\ref{thm:exp-log} to express $\mathbf{F}_{\cS}$
in the power sum symmetric function basis.  The proof follows from
very basic manipulations.

\begin{prop}
\label{thm:multinomial}
Let $\cS=\{j_1, \dots, j_{\ell}\}$ be a set of distinct positive integers. Then,
\[\mathbf{F}_{\cS}=
\sum_{0<n} f_{\cS, n}= \exp\left(\sum_{0<n} a_n p_n\right)
\]
where $a_n$ is the following sum taken over all compositions
$\alpha=(\alpha_1, \alpha_2, \dots)$ of $n$ such that each
part~$\alpha_i$ is contained in $\cS$: 
\begin{equation}\label{eq:an}
a_n=-\sum_\alpha
\frac{(-1)^{\operatorname{parts}(\alpha)}}{\operatorname{parts}(\alpha)}.
\end{equation}
Here $\operatorname{parts}(\alpha)$ is the number of parts in the
composition. 
\end{prop}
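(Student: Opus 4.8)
The plan is to take the logarithm of the product formula for $\mathbf{F}_{\cS}$ and expand it as a power series in the variables $x_i$, exactly as in the proof of the Waring formula (Lemma~\ref{thm:exp-log}) but now tracking the combinatorics introduced by the larger factor $1+\sum_{s\in\cS}x_i^s$. Starting from
\[
\log \mathbf{F}_{\cS} = \sum_{0<i}\log\Bigl(1+\sum_{s\in\cS}x_i^s\Bigr),
\]
I would apply the Mercator series $\log(1+u)=\sum_{m\ge 1}(-1)^{m+1}u^m/m$ with $u=\sum_{s\in\cS}x_i^s$. Expanding $u^m$ by the multinomial theorem produces a sum over ways of choosing $m$ parts from $\cS$, which is precisely a composition $\alpha$ of some integer $n$ into $m$ parts each lying in $\cS$, contributing the monomial $x_i^{n}$ where $n=\alpha_1+\cdots+\alpha_m$.

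The key bookkeeping step is to collect, for each fixed exponent $n$, all the terms producing $x_i^n$. A composition $\alpha$ with $\operatorname{parts}(\alpha)=m$ arises from the $m$-th term of the logarithm, carrying the sign $(-1)^{m+1}$ and the factor $1/m$; note that since the $u^m$ expansion orders the chosen parts, each ordered composition $\alpha$ is counted exactly once (the multinomial coefficient is absorbed by summing over orderings rather than over the underlying multiset). Summing $x_i^n$ over all $i$ turns $\sum_{0<i}x_i^n$ into the power sum $p_n$, so that
\[
\log \mathbf{F}_{\cS}=\sum_{0<n}\Bigl(\sum_{\alpha}\frac{(-1)^{\operatorname{parts}(\alpha)+1}}{\operatorname{parts}(\alpha)}\Bigr)p_n,
\]
where the inner sum runs over compositions $\alpha$ of $n$ with all parts in $\cS$. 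Pulling the overall minus sign out to match the stated $(-1)^{\operatorname{parts}(\alpha)}$ gives exactly $a_n$ as in Eq.~\eqref{eq:an}. Exponentiating both sides then yields the claimed formula.

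I would present this as a short computation mirroring the Waring formula proof, emphasizing only the new ingredient. The main obstacle—really the only subtle point—is getting the combinatorial identification of the multinomial expansion of $\bigl(\sum_{s\in\cS}x_i^s\bigr)^m$ with ordered compositions correct, so that the sign and the $1/\operatorname{parts}(\alpha)$ weight land precisely where they should and no stray multinomial coefficients remain. As a consistency check I would verify the two named special cases: for $\cS=\{1\}$ the only composition of $n$ is $(1,1,\dots,1)$ with $n$ parts, giving $a_n=-(-1)^n/n=(-1)^{n+1}/n$, which recovers $\mathbf{E}$; and for $\cS=\{1,2,3,\dots\}$ one sums over all compositions of $n$, and the identity $-\sum_\alpha (-1)^{\operatorname{parts}(\alpha)}/\operatorname{parts}(\alpha)=1/n$ recovers $a_n=1/n$ and hence $\mathbf{H}$, confirming the formula reduces correctly to Lemma~\ref{thm:exp-log} in both boundary cases.
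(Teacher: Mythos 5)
Your proposal is correct and follows essentially the same route as the paper: take the logarithm of the product $\prod_{0<i}\bigl(1+\sum_{s\in\cS}x_i^s\bigr)$, expand via the Mercator series, and identify the multinomial expansion of $\bigl(\sum_{s\in\cS}x_i^s\bigr)^m$ with ordered compositions of $n$ into parts from $\cS$, which is exactly the step the paper's proof performs (more tersely) in passing from $\sum_{0<k}\frac{(-1)^{k+1}}{k}\bigl(\sum_{s\in\cS}x_i^s\bigr)^k$ to $\sum_{0<n}a_nx_i^n$. Your explicit treatment of the ordered-composition bookkeeping and the verification of the $\cS=\{1\}$ and $\cS=\{1,2,\dots\}$ special cases are consistent with, and slightly more detailed than, the published argument.
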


\begin{proof}First we note that as the elements of~$\cS$ are all
  positive, $a_n$ is well defined since the number of such
  compositions is finite. Next we apply the same log-exp expansion,
  and some very basic coefficient extraction formulas:
\begin{eqnarray*}
\log \sum_{0<n} f_{\cS,n} 
&=& \log \prod_{0<i} ( 1+ \sum_{s\in\cS}x_i^s) \\
&=&-\sum_{0<i}\log \frac{1}{1-\sum_{s\in\cS}(-x_i^s)}\\ 
&=& \sum_{0<i} \sum_{0<k} \frac{(-1)^{k+1}}{k}(\sum_{s\in\cS} x_i^s)^k\\
&=&\sum_{0<i} \sum_{0<n} a_n x_i^n\\
&=&\sum_{0<n} a_n p_n.
\end{eqnarray*}
\end{proof}
There are a few simplifications to note. If~$\cS=\{1\}$, then
$a_n=\frac{-1}{n}$, since there is only one term in the summation in
Equation~\eqref{eq:an}.  When $\cS=\{1, 2, \dots \}$,
$a_n=\frac{1}{n}$ as the sum is over all compositions, and we invoke a
M\"obius inversion argument. Further, when $\cS=\{s_1,\dots, s_\ell\}$
is finite, we can express this as follows:
\begin{equation}
\sum_{0<n} f_{\cS, n} = \exp\left( \sum_{0<k}\frac{ (-1)^{k+1}}{k}
  \sum_{i_1+i_2+\dots+i_\ell = k} \binom{k}{i_1~i_2~\dots
    ~i_\ell} p_{s_1i_1+s_2i_2+\dots+s_\ell i_\ell}
\right).
\end{equation}

\begin{theorem}
\label{thm:graphGF}
Let~$\cS=\{j_1, \dots, j_{\ell}\}$ be a finite set of~$\ell$ distinct
positive integers. Then~$\mathbf{G}_\cS$, the~$X$-generating
function for the symmetric species of labelled simple graphs with edge
weights from~$\cS$ satisfies:
\begin{eqnarray*}
\mathbf{G}_\cS&=& \left(\sum_n f_{\cS,n} [e_2]\right) = \mathbf{F}_{\cS}[e_2] \\
&=&\exp\left( \sum_{0<n}\frac{a_n}{2} (p_n^2-p_{2n}) \right), 
\end{eqnarray*}
with~$a_n$ as defined in Proposition~\ref{thm:multinomial}.
\end{theorem}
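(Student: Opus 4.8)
The plan is to prove the two displayed equalities separately: first the combinatorial identity $\mathbf{G}_\cS=\mathbf{F}_\cS[e_2]$, and then the analytic rewriting of $\mathbf{F}_\cS[e_2]$ into the exponential form. Since plethysm is linear in its first argument and $\mathbf{F}_\cS=\sum_n f_{\cS,n}$, the intermediate expression $\sum_n f_{\cS,n}[e_2]$ coincides with $\mathbf{F}_\cS[e_2]$ automatically, so there are really only these two pieces of content to establish.

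For the first equality I would appeal directly to the species description already recorded in Proposition~\ref{thm:species}(1): the class $\mathcal{G}_\cS$ is isomorphic to $\bigcup_{\lambda\vdash n;\,\lambda_i\in\cS} M_\lambda[M_{(1,1)}(X)]$, the $X$-generating function of the edge species $M_{(1,1)}$ is $e_2$, and composition of a multiassembly with a structure is realised on $X$-generating functions by plethysm. Summing over $n$ and over $\lambda$, and using that $f_{\cS,n}=\sum_{\lambda\vdash n;\,\lambda_i\in\cS}m_\lambda$ is the $X$-generating function of $\bigcup_\lambda M_\lambda$, then gives $\mathbf{G}_\cS=\mathbf{F}_\cS[e_2]$. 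As an independent check one can verify this by hand: $\mathbf{F}_\cS=\prod_{0<i}(1+\sum_{s\in\cS}x_i^s)$ is symmetric, and plethysm by $e_2=\sum_{i<j}x_ix_j$ amounts to substituting the alphabet of monomials $\{x_ix_j : i<j\}$ for the variables, turning the product into $\prod_{i<j}(1+\sum_{s\in\cS}(x_ix_j)^s)$, which is exactly Eq.~\eqref{eq:graph}.

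For the second equality I would start from the power-sum form $\mathbf{F}_\cS=\exp(\sum_{0<n}a_n p_n)$ supplied by Proposition~\ref{thm:multinomial}. Plethysm in the first slot is additive and multiplicative, hence a ring homomorphism, so it commutes with $\exp$; this yields $\mathbf{F}_\cS[e_2]=\exp(\sum_{0<n}a_n\,(p_n[e_2]))$. It then remains to compute $p_n[e_2]$. Writing $e_2=\frac12(p_1^2-p_2)$ by Newton's identity and applying the power-sum plethysm rule $p_n[\sum_\lambda c_\lambda p_\lambda]=\sum_\lambda c_\lambda\, p_{(n\lambda_1)}p_{(n\lambda_2)}\cdots$ gives $p_n[e_2]=\frac12(p_n^2-p_{2n})$, exactly as in the $\overline{\mathbf{G}}$ computation following Lemma~\ref{thm:exp-log}. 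Substituting produces $\exp(\sum_{0<n}\frac{a_n}{2}(p_n^2-p_{2n}))$, as claimed.

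Neither step is genuinely hard once Propositions~\ref{thm:species} and~\ref{thm:multinomial} are in hand; the only point demanding care is the interchange of $\exp$ with plethysm, where I would note that $\exp$ is a formal power series in its argument and that $(u^k)[w]=(u[w])^k$ follows from multiplicativity, so that $\exp(u)[w]=\exp(u[w])$ term by term. The main conceptual obstacle, already discharged in the species section, is justifying that the edge substitution is correctly encoded by plethysm by $e_2$; beyond that, the argument is routine bookkeeping.
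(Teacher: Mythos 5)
Your proposal is correct and follows essentially the same route as the paper: the first equality from Proposition~\ref{thm:species}, the second from Proposition~\ref{thm:multinomial} together with $e_2=\tfrac12(p_1^2-p_2)$ and the homomorphism property of plethysm in the outer argument. You simply fill in the details the paper leaves implicit (and silently correct its typo ``$e_2=p_i^2-p_2$'' to the right Newton identity), so there is nothing to change.
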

\begin{proof}
This follows from Propositions~\ref{thm:species} and ~\ref{thm:multinomial}, and
the fact that $e_2=p_i^2-p_2$. 
\end{proof}
Returning to our example:
\begin{equation*}
\label{eq:23Graphs}
\mathbf{G}_{\{2,3\}} = \exp\left( \sum_{0<n}\frac{ (-1)^{n+1}}{2n} \sum_{i=0}^n \binom{n}{i} p_{3n-i}^2-p_{6n-2i}\right).
\end{equation*}

\section{D-finite symmetric series}
\label{sec:DFSymSeries}
Recall that a series $S\in\field[[x_1,\dots,x_n]]$ is 
{\em D-finite\/} in $x_1,\dots,x_n$ when the set of all partial derivatives
and their iterates, $\partial^{i_1+\dots+i_n}F/\partial x_1^{i_1}\dotsm
\partial x_n^{i_n}$,
spans a finite-dimensional vector space over the field
$\field(x_1,\dots, x_n)$.  This was generalized to an infinite number
of variables by Gessel~\cite{Gess90}, who had symmetric functions in
mind. A series $S\in\field[[x_1,x_2,\dotsc]]$ is {\em D-finite\/} in
the $x_i$ if the specialization to 0 of all but a finite (arbritrary)
choice of the variable set results in a D-finite function (in the
finite sense). In this case, many of the properties of the finite
multivariate case hold true. One notable exception is closure under
algebraic substitution, which requires additional hypotheses.

The definition is then tailored to symmetric series by considering the
algebra of symmetric series as generated over~$\mathbb{Q}$ by the set of
power sum symmetric functions~$\{p_1,p_2,\dotsc\}$. A symmetric series
is called {\em D-finite\/} when it is D-finite as a function of the
$p_i$'s. The applicability of this definition will be apparent in a
moment.

The two prototypcial examples, $\mathbf{H}$ and $\mathbf{E}$ are
easily seen to be D-finite, as any such specialization of variables
results in an exponential of a polynomial, which is clearly
D-finite. Similarly, from the expression in
Proposition~\ref{thm:multinomial} we see that the same argument will
hold for any $\mathbf{F}_\cS$.
\begin{theorem} 
\label{thm:Dfinite}
For any set of positive integers~$\cS$, the series
$\mathbf{F}_\cS$ and  $\mathbf{G}_\cS$ are both D-finite symmetric series with
  respect to the $p$-basis.
\end{theorem}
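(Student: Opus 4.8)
The plan is to read off D-finiteness directly from the closed exponential forms already in hand, following exactly the template sketched for $\mathbf{H}$ and $\mathbf{E}$. By Proposition~\ref{thm:multinomial} we have $\mathbf{F}_\cS = \exp\left(\sum_{0<n} a_n p_n\right)$, and by Theorem~\ref{thm:graphGF} (whose plethysm computation applies verbatim for any~$\cS$, finite or not) we have $\mathbf{G}_\cS = \exp\left(\sum_{0<n} \frac{a_n}{2}(p_n^2 - p_{2n})\right)$, where each $a_n$ is a fixed rational number given by Equation~\eqref{eq:an}. D-finiteness of a symmetric series is tested by specializing all but an arbitrary finite set of the variables $p_1, p_2, \dots$ to zero and checking D-finiteness of the resulting finite-variable function. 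So I would fix a finite index set~$I$, set $p_j = 0$ for every $j \notin I$, and analyze the two specialized series.

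The key step is that each exponent then collapses to a \emph{polynomial} in the retained variables $\{p_i : i\in I\}$. For $\mathbf{F}_\cS$ this is immediate: the summand $a_n p_n$ survives only when $n\in I$, so the exponent becomes the finite linear expression $\sum_{n\in I} a_n p_n$. For $\mathbf{G}_\cS$ the $n$-th summand $\frac{a_n}{2}(p_n^2 - p_{2n})$ involves only the power sums $p_n$ and $p_{2n}$, both of index at least~$n$; since $I$ is finite there are only finitely many $n$ for which either $n\in I$ or $2n\in I$, so the exponent collapses to a polynomial of degree at most two in the retained variables. This truncation is the crux of the argument, and it works precisely because the power sums occurring in the $n$-th term all have index~$\ge n$. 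That index structure is exactly the feature a generic element of $\Lambda((X))$ written in the $p$-basis need not enjoy.

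It then remains to invoke the elementary fact that the exponential of a polynomial is D-finite in the finite-variable sense. If $F = \exp(P)$ with $P$ a polynomial in $p_{i_1},\dots,p_{i_r}$, then $\partial F/\partial p_{i} = (\partial P/\partial p_{i})\, F$, and an easy induction shows that every iterated partial derivative of~$F$ is a polynomial multiple of~$F$. Hence the $\field(p_{i_1},\dots,p_{i_r})$-span of all partials of~$F$ is the one-dimensional space generated by~$F$, so $F$ is D-finite. Applying this to each of the two truncated exponents produced above yields D-finiteness of every admissible specialization, and therefore of $\mathbf{F}_\cS$ and $\mathbf{G}_\cS$ as symmetric series.

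I do not expect a genuine obstacle: once the exponential forms are granted, the argument is purely formal and uniform in~$\cS$, covering the infinite case along with the finite one. The single point warranting explicit care is the truncation step — one must confirm that passing to finitely many $p_i$ really leaves a polynomial and not an infinite tail — and, as noted, this is guaranteed by the index bookkeeping of the exponents rather than by any special property of the coefficients $a_n$.
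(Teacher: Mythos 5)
Your proposal is correct and follows essentially the same route as the paper: the paper's proof likewise observes that specializing all but finitely many of the $p_i$ to zero turns each exponent into a polynomial, and that an exponential of a polynomial is D-finite. You have merely made explicit two points the paper leaves implicit — the index bookkeeping showing the truncated exponent really is a polynomial, and the one-dimensional span argument for $\exp(P)$ — both of which are sound.
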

\begin{proof}
  For both of these symmetric series any specialization of the $p$
  variables so that only a finite number are non-zero leaves an
  exponential of a polynomial, which is easily shown to be D-finite in
  the remaining variables. We immediately conclude the D-finiteness of
  both $\mathbf{F}_\cS$ and $\mathbf{G}_\cS$, given the two
  previous results
\end{proof}

\subsection{Extracting the generating functions}
In the power notation for integer partitions,  $\lambda=1^{n_1}\dots k^{n_k}$
indicates that $i$~occurs $n_i$~times in~$\lambda$,
for~$i=1,2,\dots,k$.  The normalization constant
\[
z_\lambda:= 1^{n_1}n_1!\dotsm k^{n_k}n_k!
\]
plays the role of the square of a norm of~$p_\lambda$ in the following important
formula:
\begin{equation}\label{eq:scalp}
\rsc{p_\lambda}{p_\mu}=\delta_{\lambda,\mu}z_\lambda,
\end{equation}
where $\delta_{\lambda,\mu}$~is~1 if $\lambda=\mu$ and 0 otherwise. 

The scalar product is useful for coefficient extraction because
$\rsc{m_\lambda}{h_\mu}=\delta_{\lambda,\mu}$. If we write
$S$ in the form~$\sum_\lambda c_\lambda m_\lambda$,
then the coefficient of $x_1^{\lambda_1}\dots x_k^{\lambda_k}$ in~$S$
is $c_\lambda=\rsc S{h_\lambda}$.

The closure under Hadamard product of D-finite
series~\cite{Lips88} yields the consequence:
\begin{theorem}[Gessel~\cite{Gess90}]\label{thm:rsc_pres_df}
  Let $\mathbf{S}$ and $\mathbf{T}$ be elements of
  $\mathbb{Q}[z][[p_1,p_2,\dotsc]]$, D-finite in the $p_i$'s and $z$,
  and further suppose that $\mathbf{T}$ involves only finitely many of
  the $p_i$'s. Then $\rsc{\mathbf{S}}{\mathbf{T}}$ is D-finite as a
  function of $z$, provided it is well-defined as a power series.
\end{theorem}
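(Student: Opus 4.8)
The plan is to reduce the infinite-variable scalar product to a finite Hadamard product of ordinary power series in $z$, and then invoke the closure of D-finite series under Hadamard product. First I would use the hypothesis that $\mathbf{T}$ involves only finitely many of the $p_i$'s, say $p_1, \dots, p_N$. Because the scalar product $\rsc{\cdot}{\cdot}$ is defined on the $p$-basis by Equation~\eqref{eq:scalp}, namely $\rsc{p_\lambda}{p_\mu}=\delta_{\lambda,\mu}z_\lambda$, only those terms of $\mathbf{S}$ supported on partitions $\lambda$ with all parts at most $N$ can contribute to $\rsc{\mathbf{S}}{\mathbf{T}}$. Thus I may replace $\mathbf{S}$ by its specialization $\mathbf{S}(p_1,\dots,p_N,0,0,\dots)$ without changing the value of the scalar product, and similarly for $\mathbf{T}$. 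By the definition of D-finiteness for symmetric series (setting all but finitely many $p_i$ to zero), both specializations are D-finite in the finite variable set $p_1,\dots,p_N$ together with $z$.

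Next I would make the scalar product concrete on this finite set of variables. Writing $\mathbf{S}=\sum_\lambda s_\lambda(z)\,p_\lambda$ and $\mathbf{T}=\sum_\mu t_\mu(z)\,p_\mu$ with $\lambda,\mu$ ranging over partitions with parts in $\{1,\dots,N\}$, orthogonality gives
\[
\rsc{\mathbf{S}}{\mathbf{T}}=\sum_{\lambda} z_\lambda\, s_\lambda(z)\, t_\lambda(z).
\]
The key observation is that this diagonal pairing is exactly a Hadamard product in a disguised form. If one substitutes $p_i \mapsto u_i$ and treats the coefficients in a suitable monomial expansion in $u_1,\dots,u_N$, the operation $\rsc{\cdot}{\cdot}$ extracts the diagonal of the two series with respect to these monomial variables (up to the $z_\lambda$ factors, which amount to a harmless reweighting absorbed by differentiating under a standard kernel, as in the $\exp(\sum u_i v_i / i)$ reproducing-kernel description of the scalar product). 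Since the Hadamard product and the diagonal are both standard D-finiteness-preserving operations for multivariate D-finite series~\cite{Lips88}, the resulting series in $z$ is D-finite.

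The main obstacle I expect is not the algebra but the bookkeeping needed to present the pairing $\sum_\lambda z_\lambda s_\lambda t_\lambda$ honestly as a diagonal/Hadamard extraction in finitely many auxiliary variables, and to be careful that the specialization step is legitimate, i.e. that $\rsc{\mathbf{S}}{\mathbf{T}}$ is genuinely unchanged by killing $p_{N+1},p_{N+2},\dots$ in $\mathbf{S}$. This requires the well-definedness caveat in the statement: one must know that the sum over $\lambda$ converges coefficientwise in $z$, which is guaranteed precisely when the scalar product is a well-defined power series, so no issue of infinitely many contributions at a fixed power of $z$ arises. Once these two points are handled, the conclusion follows immediately from the finite-variable closure result, and I would only need to remark that passing the parameter $z$ through the whole argument is routine because D-finiteness is taken jointly in $z$ and the $p_i$'s.
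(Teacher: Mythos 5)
First, note that the paper does not actually prove this statement: it is quoted from Gessel~\cite{Gess90}, and the text offers only the remark that it is a consequence of the closure of D-finite series under Hadamard products~\cite{Lips88}. Your outline is therefore being measured against Gessel's original argument, and its skeleton is the right one: the hypothesis that $\mathbf{T}$ involves only $p_1,\dots,p_N$ lets you discard the part of $\mathbf{S}$ supported on partitions with a part exceeding $N$; both truncations are D-finite in the finite variable set $p_1,\dots,p_N,z$ by the very definition of D-finiteness for symmetric series; and orthogonality of the $p_\lambda$ reduces the scalar product to the weighted diagonal sum $\sum_\lambda z_\lambda\, s_\lambda(z)\, t_\lambda(z)$. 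Up to this point you match the intended proof.

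The gap is in the sentence that disposes of the weights. Passing from $\sum_\lambda z_\lambda s_\lambda t_\lambda$ to an honest composition of D-finiteness-preserving operations is the substantive content of the theorem, not bookkeeping, and ``a harmless reweighting absorbed by differentiating under a standard kernel'' does not accomplish it. Concretely, for $\lambda=1^{m_1}\cdots N^{m_N}$ one has $z_\lambda=\prod_i i^{m_i}m_i!$. The Hadamard product of $\mathbf{S}$ and $\mathbf{T}$ in the $p$-variables yields $\sum_\lambda s_\lambda t_\lambda p_\lambda$, which is D-finite by Lipshitz, and the factor $\prod_i i^{m_i}$ is indeed harmless (a further Hadamard product with the rational series $\prod_i(1-ip_i)^{-1}$, or the substitution $p_i\mapsto ip_i$). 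But the factors $m_i!$ are not produced by differentiation; they require a Borel--Laplace-type transform, i.e.\ closure of D-finite series under definite integration. After that one must still sum over all $\lambda$, which amounts to an evaluation at $p_1=\dots=p_N=1$, and evaluation of a multivariate D-finite formal series at a point is \emph{not} a generic closure property --- this is exactly where the ``well-defined as a power series'' hypothesis does real work, and it is the step Gessel isolates and proves separately. As written, your argument asserts the conclusion of that intermediate lemma rather than proving it; to repair it you should either carry out the Laplace-transform-plus-evaluation argument explicitly or cite the corresponding auxiliary theorem of~\cite{Gess90}.
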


From this and Lemma~\ref{thm:Dfinite} our main theorem follows
almost immediately. 
\begin{theorem}\label{thm:Main}
   Let $\cS$ and $\cK$ be sets of positive integers, and
  suppose additionally that~$\cK$ is finite. Let $G_{\cS,\cK}(z)$ be
  the generating function for
  the class~$\mathcal{G}_{\cS,\cK}$ of well-labelled graphs where edge
  weights are from $\cS\cup\{0\}$ and the sum of weights of the edges
  incident to any given vertex is an element of~$\cK$.  Then,
\[G_{\cS,\cK}(z)=\sum_n |\mathcal{G}_{\cS,\cK,n}|\, z^n\] 
  is D-finite. Furthermore, the differential
  equation satisfied by~$G_{\cS,\cK}(z)$ is theoretically computable.
\end{theorem}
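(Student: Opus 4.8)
The plan is to realize $G_{\cS,\cK}(z)$ as a scalar product of the symmetric series $\mathbf{G}_\cS$ against a single explicit kernel, and then to invoke Gessel's preservation theorem (Theorem~\ref{thm:rsc_pres_df}). First I would reinterpret the combinatorial count as a coefficient extraction from $\mathbf{G}_\cS$. Since every element of $\cK$ is a positive integer, a well-labelled graph in $\mathcal{G}_{\cS,\cK,n}$ has no isolated vertices and its degree sequence $\mathbf{d}=(d_1,\dots,d_n)$ lies in $\cK^n$; conversely each such monomial $x_1^{d_1}\cdots x_n^{d_n}$ genuinely involves all $n$ variables and so corresponds to graphs on exactly $\{1,\dots,n\}$. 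Hence
\[
|\mathcal{G}_{\cS,\cK,n}|=\sum_{\mathbf{d}\in\cK^n}[x_1^{d_1}\cdots x_n^{d_n}]\,\mathbf{G}_\cS .
\]

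Next I would convert each coefficient into a scalar product using the duality $\rsc{m_\lambda}{h_\mu}=\delta_{\lambda,\mu}$. Because $\mathbf{G}_\cS$ is symmetric, the coefficient of $x_1^{d_1}\cdots x_n^{d_n}$ equals the coefficient of $m_{\lambda}$ for $\lambda=\operatorname{sort}(\mathbf{d})$, and this equals $\rsc{\mathbf{G}_\cS}{h_{d_1}h_{d_2}\cdots h_{d_n}}$ since $h_{d_1}\cdots h_{d_n}=h_\lambda$. Summing over all $\mathbf{d}\in\cK^n$ and then over $n$ weighted by $z^n$ collapses the product into a geometric series:
\[
G_{\cS,\cK}(z)=\sum_n\rsc{\mathbf{G}_\cS}{\Bigl(\sum_{k\in\cK}h_k\Bigr)^{\!n}}z^n
=\rsc{\mathbf{G}_\cS}{\frac{1}{\,1-z\sum_{k\in\cK}h_k\,}} .
\]
The crux of this step, and the only place I expect real care is needed, is the labelled-versus-partition bookkeeping: one must check that summing the single-monomial coefficients over all \emph{compositions} $\mathbf{d}\in\cK^n$ reproduces exactly the labelled count, with the correct multiplicities supplied automatically by the expansion of $\bigl(\sum_{k\in\cK}h_k\bigr)^n$. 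A small worked case (e.g.\ $\cK=\{1,2\}$, $n=2$) confirms that the off-diagonal degree sequences are counted with the right multiplicity.

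It then remains to apply Theorem~\ref{thm:rsc_pres_df}. On one side, $\mathbf{G}_\cS$ is D-finite in the $p_i$'s by Theorem~\ref{thm:Dfinite}, and trivially D-finite in $z$ since it is independent of $z$. On the other side, because $\cK$ is finite the kernel $\bigl(1-z\sum_{k\in\cK}h_k\bigr)^{-1}$ is a rational function of $z$ and of $p_1,\dots,p_{\max\cK}$ only; it therefore involves finitely many power sums, lies in $\mathbb{Q}[z][[p_1,p_2,\dotsc]]$, and is D-finite in $z$ and those variables. Each coefficient $|\mathcal{G}_{\cS,\cK,n}|$ is finite, so the scalar product is well-defined as a power series, and the interchange of summation and pairing is legitimate because $\bigl(\sum_k h_k\bigr)^n$ has minimum degree $n\min\cK\to\infty$, so only finitely many $n$ contribute to each coefficient of $z$. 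The hypotheses of Theorem~\ref{thm:rsc_pres_df} are thus met, and $G_{\cS,\cK}(z)$ is D-finite in $z$.

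Finally, for computability I would track effectivity through each step. Theorem~\ref{thm:graphGF} gives an explicit closed form for $\mathbf{G}_\cS$ in the power-sum basis, from which a D-finite description in the $p_i$'s is produced effectively; the kernel is explicit; and the scalar-product closure underlying Theorem~\ref{thm:rsc_pres_df} rests on the effective Hadamard-product algorithm for D-finite series, which is precisely what is automated in~\cite{ChMiSa05}. Composing these effective procedures yields, in principle, the differential equation satisfied by $G_{\cS,\cK}(z)$, which establishes the final claim of the theorem.
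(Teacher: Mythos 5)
Your proposal is correct and follows essentially the same route as the paper: realize $G_{\cS,\cK}(z)$ as a scalar product of the D-finite symmetric series $\mathbf{G}_\cS$ (Theorem~\ref{thm:Dfinite}) against a rational kernel built from $\sum_{k\in\cK}h_k$, which involves only finitely many $p_i$ since $\cK$ is finite, and then apply Theorem~\ref{thm:rsc_pres_df}, with effectivity supplied by~\cite{ChMiSa05}. The one divergence is that the paper's Eq.~\eqref{eq:scalarproduct} uses the kernel $\bigl(1-\sum_{k\in\cK}h_kz^k\bigr)^{-1}$, which weights by total degree rather than by number of vertices; your kernel $\bigl(1-z\sum_{k\in\cK}h_k\bigr)^{-1}$ is the one consistent with the definition $G_{\cS,\cK}(z)=\sum_n|\mathcal{G}_{\cS,\cK,n}|\,z^n$ (and with the paper's own worked example for $\cK=\{2\}$), and your explicit justification of the composition-versus-partition bookkeeping, which the paper leaves implicit, is exactly the point that needed care.
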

\begin{proof}
The series are combinatorial generating functions,
and so they exist.  We remark that
\begin{equation}
\label{eq:scalarproduct}
G_{\cS,\cK}(z) = \rsc {\mathbf{G}_\cS}{\frac{1}{1-\sum_{k\in\cK} h_k z^k}}.
\end{equation}
The first argument to the scalar product is D-finite by
Lemma~\ref{thm:Dfinite}. Furthermore, $\sum_{k\in\cK} h_k z^k$ is a
polynomial in the power sum basis and~$z$ when $\cK$ is finite. Hence
the second argument is rational, and D-finite. The result
follows by Theorem~\ref{thm:rsc_pres_df}. We address the computability
in the next section.
\end{proof}

\begin{example}
The series can be expanded:
\[\begin{split}
\mathbf{G}_{\{3,2\}}=\mathbf{F}_{\{3,2\}}[e_2] 
&=\left(m_{2}+m_{3}+m_{2,2}+m_{3,2}+m_{2,2,2}+m_{3,3}+\dots\right)[e_2]\\
&=m_{2,2}+m_{3,3}+m_{4, 2, 2}+3m_{2, 2, 2, 2}+m_{5, 3, 2}+m_{3, 3, 2,2}\\
&\quad+m_{{6,2,2,2}}+2\,m_{{4,4,2,2}}+6\,m_{{4,2,2,2,2}}+m_{{4,4,4}}+m_{{6,3,
3}}+15\,m_{{2,2,2,2,2,2}}+3\,m_{{3,3,3,3}}+\dots
\end{split}
\]
To extract the generating function of $2$-regular graphs examine the
coefficients of $m_{(2,2,\dots, 2)}$: 
\[G_{\{3,2\}, \{2\}}(z)=\rsc{\mathbf{G}_{\{3,2\}}}{\sum_{0<n}
    h_2^n z^n}=z^2+3z^4+15z^6+105z^8+\dots.\] In this
case, it is corresponds the number of matchings.
Similarly, 
\[G_{\{3,2\}, \{2,3\}}(z)=\rsc{\mathbf{G}_{\{3,2\}}}{\sum_{0<n}\sum_{k=0}^n
    h_3^kh_2^{n-k} z^n}=2z^2+7z^4+36z^6+429z^8+\dots.\]
\end{example}

An extraction for any degree sequence is possible. The D-finiteness
result can be generalized to handle infinite~$\cK$ provided
$\sum_n\sum_{\lambda\vdash n: \lambda_i\in \cK} h_\lambda t^n$ is a
D-finite symmetric series. In general, to determine the generating
function of graphs with a fixed set of degree sequences $\mathcal{D}$,
it suffices to consider the series $\sum_{\mathbf{d}\in \mathcal{D}}  h_dz^{\operatorname{parts}(\mathbf{d})}$.

We could also mark something other than number of vertices. As we
noted, the symmetric series $\sum h_nz^n$ is D-finite, and in fact,
for any finite $k$, the series~$\sum h_n^kz^n$ is D-finite. This would
extract the generating function for the subclass of all regular graphs
on~$k$ vertices from a given graph class with $n$ marking the
regularity. The resulting generating function is also D-finite.

\subsection{Comments on effective computation}
There are two computational tools at hand to compute
$G_{\cS,\cK}(z)$. One could iteratively expand $f_{\cS, n}[e_2]$ in
the monomial basis as we did in the previous example. It might be
slightly more efficient to expand the exponential expression. In
practice, we were able to get some small results with this strategy.

Alternatively, we can make use of the fact that Gessel's result is
effective: given the system of differential equations satisfied by
symmetric series $\mathbf{F}$ and $\mathbf{G}$, there are
algorithms~\cite{ChMiSa05} to compute the differential equation
satisfied by the scalar product. (Of course, at least one of
$\mathbf{F}$ and $\mathbf{G}$ must contain other variables).  It is
straightforward to define the system satisfied by $\mathbf{G}_\cS$,
since it is expressed as an exponential of a polynomial. Consequently
in theory we can compute the differential equation satisfied by
$G_{\cS,\cK}(z)$. In practice, using current algorithms, the
computations are too resource intensive to deliver results when the
number of variables is more than 5. We were able to confirm the
correctness in small cases. 





\section{Other generalizations}
\label{sec:generalK}
By playing with the inner series in the plethysm, we can enumeration
other families of objects, such as hypergraphs, or cyclic
coverings of sets. The details are essentially given in~\cite{Mish07}.

As mentioned above, we could be interested in other kinds of functions
for the growth of $\mathbf{d}$. Other options should be possible to
extract, provided the generating function for the extractor is
D-finite.

A different future direction would be to try to adapt the approach of
de Panafieu and Ramos~\cite{deRa15} for multigraphs to the weighted
edge versions.
\section*{Acknowledgements}
The author gratefully acknowledges funding from NSERC Discovery
Grant. 


\bibliographystyle{acm}
\bibliography{Main}

\end{document}